\documentclass[12pt,reqno]{amsart}
\usepackage{amssymb,amsmath,amsthm}
\usepackage{mathtools}
\usepackage{graphicx}
\usepackage{subcaption}
\usepackage{fullpage}
\usepackage{scrextend}
\usepackage{siunitx}
\usepackage{enumerate}
\usepackage{tikz,calc}
\usepackage{tikz-cd}
\usetikzlibrary{automata,positioning}
\usepackage{comment}
\usetikzlibrary{calc}
\usepackage{epsfig,graphicx}
\usepackage{listings}
\usepackage{enumitem}

\usepackage{float}
\usepackage{bbm}
\usepackage[colorlinks=true, pdfstartview=FitH, linkcolor=blue, citecolor=blue, urlcolor=blue]{hyperref}

\newcommand{\R}{\mathbb{R}}

\newcommand{\Mod}[1]{\ (\text{mod}\ #1)}
\renewcommand{\Mod}[1]{{\ifmmode\text{\rm\ (mod~$#1$)}\else\discretionary{}{}{\hbox{ }}\rm(mod~$#1$)\fi}}

\newcommand{\g}{\mathcal{G}_{k_1,k_2,k_3}}
\newcommand{\skel}{B_{k_1,k_2,k_3}}

\setlength{\unitlength}{1in}
 
\newtheorem{theorem}{Theorem}[section]

\newtheorem{lemma}[theorem]{Lemma}
\newtheorem{cor}[theorem]{Corollary}

\theoremstyle{definition} 
\newtheorem{defn}[theorem]{Definition}
\newtheorem{remark}[theorem]{Remark}
\numberwithin{theorem}{section}

\begin{document}

\title{An exponential bound for simultaneous embeddings of planar graphs}
\author{Ritesh Goenka}
\address{Department of Mathematics \\ University of British Columbia \\ Room 121, 1984 Mathematics Road \\ Vancouver, BC, Canada V6T 1Z2}
\email{rgoenka@math.ubc.ca}
\author{Pardis Semnani}
\address{Department of Mathematics \\ University of British Columbia \\ Room 121, 1984 Mathematics Road \\ Vancouver, BC, Canada V6T 1Z2}
\email{psemnani@math.ubc.ca}
\author{Chi Hoi Yip}
\address{Department of Mathematics \\ University of British Columbia \\ Room 121, 1984 Mathematics Road \\ Vancouver, BC, Canada V6T 1Z2}
\email{kyleyip@math.ubc.ca}
\subjclass[2020]{05C62, 68R10}
\keywords{Graph drawing, planar graphs, simultaneous straight-line embeddings}

\maketitle

\begin{abstract}
We show that there are $O(n \cdot 4^{n/11})$ planar graphs on $n$ vertices which do not admit a simultaneous straight-line embedding on any $n$-point set in the plane. In particular, this improves the best known bound $O(n!)$ significantly.
\end{abstract}

\section{Introduction}

A planar graph $G$ admits a \emph{plane straight-line embedding} (also known as \emph{F\'ary embedding}) on a point set $X \subset \R^2$ if $G$ can be drawn by pairwise noncrossing line segments such that each vertex is represented by a point in $X$. F\'ary-Wagner theorem \cite{F48, W36} states that every planar graph admits a straight-line embedding on $\R^2$. There are many open problems related to straight-line embeddings; we refer to \cite[Section 9.2]{BMP05} for more discussions.

Of particular interest is the universal set problem. A point set $X$ in the Euclidean plane is said to be \emph{$n$-universal} if every planar graph with $n$ vertices admits a plane straight-line embedding on $X$. De Fraysseix, Pach, and Pollack \cite{DPP90} proved that the $(n-1) \times (n-1)$ integer grid is an $n$-universal set, and Chrobak and Payne \cite{CP95} provided a linear time algorithm to construct such an embedding. Let $f(n)$ be the minimum size of an $n$-universal set. The best-known upper bound on $f(n)$ is $n^2/4+\Theta(n)$, due to Bannister, Cheng, Devanny, and  Eppstein \cite{BCDE14}. It is clear that $f(n) \geq n$, and a brute-force computer search shows that $f(n)=n$ for $n \leq 10$ \cite{CHK15}. For sufficiently large $n$, this trivial lower bound has been improved subsequently in \cite{DPP90, CK89, K04} with the best known lower bound $f(n) \geq (1.293-o(1))n$ due to Scheucher, Schrezenmaier, and Steiner \cite{SSS20}.

Another problem, which is closely related, is the problem of simultaneous embedding of planar graphs, studied systematically in \cite{BCC07} (see also the survey by Bl\"asius~et al.~\cite{BKR13}). We follow the notation in \cite{BCC07, CHK15, SSS20}. Given a collection $\mathcal{G}_n$ of planar graphs on $n$ vertices, we say the collection is \emph{simultaneously embeddable} (without mapping) if there exists a point set $X \subset \R^2$ with size $n$, such that all graphs in  $\mathcal{G}_n$  admit a straight-line embedding on $X$; otherwise, we say $\mathcal{G}_n$ is a \emph{conflict collection}. Let $\sigma(n)$ be the minimum size of a conflict collection $\mathcal{G}_n$ of planar graphs on $n$ vertices.

Brass~et al. \cite[Section 7]{BCC07} asked if there exists $n$ such that $\sigma(n)= 2$. This question is widely open. Cardinal, Hoffmann, and Kusters \cite{CHK15} proved that $\sigma(35) \leq 7393$ by constructing a nice conflict collection. Using an exhaustive computer search, Scheuche~et al.~\cite{SSS20} found a conflict collection of $49$ graphs on $11$ vertices. On the other hand, to the best of our knowledge, the best known upper bound on $\sigma(n)$ for large $n$ is $\frac{1}{8}(5n+12)(n-1)!$ \cite[Lemma 4]{CHK15}, which is super-exponential in $n$.

We establish the following theorem, which gives an exponential upper bound on $\sigma(n)$.

\begin{theorem}\label{thm}
    For $n \ge 238$, there is a collection $\mathcal{G}_n$ of non-isomorphic planar graphs on $n$ vertices containing at least $\frac{1}{6} \cdot 7^{(2n-8)/11}$ graphs, no more than $(21n + 552) 4^{(n+37)/11}$ of which are simultaneously embeddable. In particular, $\sigma(n) = O(n \cdot 4^{n/11})$.
\end{theorem}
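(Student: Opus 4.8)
The goal is to construct a large family of planar graphs on $n$ vertices such that only a tiny fraction of them can be simultaneously embedded on any common point set. The key idea is a counting/pigeonhole argument: if we have a family $\mathcal{F}$ of $N$ planar graphs, each of which *forces* the point set (or a large portion of it) into one of relatively few combinatorial "types", and if two graphs of *incompatible* types cannot share a point set, then a simultaneously embeddable subfamily must all have the same type, hence has size at most $N/(\text{number of types})$. To make this work I would build graphs whose embeddings are rigid enough that the cyclic/rotational structure of the point set in convex position (or the order type) is essentially determined by the graph. A natural candidate is to take a rigid "skeleton" graph $\skel$ on a bounded number of vertices (here the notation $B_{k_1,k_2,k_3}$ and $\mathcal{G}_{k_1,k_2,k_3}$ suggest a three-parameter construction, with the $11$ in the exponent coming from an $11$-vertex gadget, matching the Scheucher–Schrezenmaier–Steiner $11$-vertex conflict collection), and attach to it many small "pendant" gadgets that each must be placed in a specific pocket; the number of ways the pockets get labelled gives the multiplicative factor $7^{(2n-8)/11}$, while the rigidity forces $4^{(n+37)/11}$ possible point-set behaviours.

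**Key steps, in order.** First, I would fix the gadget: an $11$-vertex planar graph (a maximal planar graph, i.e. a triangulation, together with some structure) that is known to be "rigid" in the sense that on any $11$-point set admitting its straight-line embedding, the point set lies in a prescribed order type, and moreover the graph has few distinct embeddings. Second, take $k = \lfloor (n - c)/11 \rfloor$ disjoint copies of (variants of) this gadget, glued along a common small sub-structure or arranged so that the whole graph is planar and has $n$ vertices; the $7$ comes from the $7$ non-isomorphic labelled variants of the gadget we are allowed to use, so $|\mathcal{F}| \geq \tfrac16 \cdot 7^{(2n-8)/11}$ after accounting for isomorphism (the $\tfrac16$ absorbs automorphisms). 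Third — the crucial combinatorial core — show that whenever a sub-collection $\mathcal{S} \subseteq \mathcal{F}$ is simultaneously embeddable on a common point set $X$, each gadget-copy contributes at most $4$ "local states" (e.g. which of $4$ triangular pockets its apex sits in), and these local states are *consistent across all graphs in* $\mathcal{S}$; hence $\mathcal{S}$ is determined by a choice of one of $4$ states per block, giving $|\mathcal{S}| \leq (\text{poly}(n)) \cdot 4^{k}$, i.e. the stated bound $(21n+552)4^{(n+37)/11}$. Fourth, conclude $\sigma(n) \leq \sigma_0(n) + 1$ where $\sigma_0(n)$ is this bound (any set of graphs strictly larger than the max simultaneously-embeddable size is a conflict collection), and since $\tfrac16 7^{(2n-8)/11} \gg (21n+552)4^{(n+37)/11}$ for $n \geq 238$, the family $\mathcal{F}$ itself — or rather any $\sigma_0(n)+1$ of its members — is a conflict collection, giving $\sigma(n) = O(n\cdot 4^{n/11})$.

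**The main obstacle.** The hard part is the rigidity analysis: proving that the $11$-vertex gadget genuinely restricts any admissible point set to a bounded list of order types, and — more delicately — that a *single* point set cannot simultaneously host two copies of the family's graphs that disagree on the gadget variants in more than $\log_7(\text{poly}\cdot 4^k)$ coordinates. Here one must carefully exploit that a straight-line embedding of a triangulation is combinatorially rigid (its faces are determined), so the outer face and the rotation system at each vertex are forced; the subtlety is controlling how the many blocks interact through their shared vertices and how the freedom to permute blocks among pockets is bounded by exactly $4$ per block rather than something larger. Handling the boundary effects (the "$+37$", "$+552$", "$-8$" correction terms) and the divisibility of $n$ by $11$ is routine bookkeeping by comparison, as is verifying the final inequality $\tfrac16 7^{(2n-8)/11} > (21n+552)4^{(n+37)/11}$ for $n \geq 238$, which reduces to $(7/4)^{2n/11}$ outgrowing a linear function — a one-line estimate.
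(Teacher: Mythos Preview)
Your high-level architecture is right: build a large family via independent local choices, then show that on any fixed point set the local choices are constrained to few options each, so a simultaneously embeddable subfamily is exponentially smaller. But several of your concrete guesses about the mechanism are off, and as stated the plan would not go through.

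First, there is no $11$-vertex gadget, and the Scheucher--Schrezenmaier--Steiner collection plays no role. The $11$ is pure arithmetic. The paper takes a planar $3$-tree skeleton $B_n$ on roughly $n/11$ vertices; each additional skeleton vertex creates two new triangular faces, and into each such face one plants a graph from the Cardinal--Hoffmann--Kusters family $\mathcal{T}$ of seven $8$-vertex stacked triangulations (contributing $5$ new interior vertices per face). Thus one skeleton vertex accounts for $1 + 2\cdot 5 = 11$ vertices of the final graph, and the number of faces receiving a plant is about $2n/11$. The $7$ in the lower bound is $|\mathcal{T}|$, giving $7^{F_1} \approx 7^{2n/11}$ choices; the $\tfrac{1}{6}$ comes from the six automorphisms of the outer triangle.

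Second, the $4$ is not ``four local states per block''. The key property of $\mathcal{T}$ (from \cite{CHK15}) is that on any $8$-point set with a fixed triangular convex hull, at most \emph{two} of the seven graphs in $\mathcal{T}$ embed with the prescribed outer-face mapping. So once the skeleton's embedding is pinned down, each of the $\approx 2n/11$ faces admits at most $2$ of the seven plants, giving $2^{2n/11} = 4^{n/11}$.

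Third, the rigidity is not an order-type argument. The paper uses a simple but decisive lemma about planar $3$-trees: if the outer triangle $a,b,c$ is mapped to the convex hull of $X$ and $d$ is the vertex adjacent to all three, then the point $\phi(d)$ is \emph{uniquely} determined by the counts of interior vertices in the three subtriangles (two candidate points would force a strict inequality between those counts). Iterating this along the $3$-tree structure pins the entire skeleton to specific points of $X$; the only slack is which face of the planted graph serves as the outer face and how it maps to the convex hull, which contributes the polynomial factor $O(n)$.

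So your ``main obstacle'' paragraph aims at the wrong target: there is no delicate order-type analysis, and the interaction between blocks is trivial once one knows the $3$-tree rigidity lemma. The genuine ingredients you are missing are (i) the CHK family $\mathcal{T}$ with its ``at most two of seven'' property, and (ii) the one-line uniqueness lemma for extending a partial embedding of a stacked triangulation.
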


As a quick application, Theorem~\ref{thm} leads to a non-trivial lower bound on the size of an $n$-universal set, which is, unfortunately, worse than the best-known lower bound \cite{SSS20}.

\begin{cor}\label{cor}
    $f(n) \geq (1.059-o(1))n$ as $n \to \infty$.
\end{cor}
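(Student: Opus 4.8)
The plan is to deduce the bound from Theorem~\ref{thm} by a counting argument over the $n$-point subsets of a minimum universal set. Fix $n \ge 238$ and let $X \subset \R^2$ be an $n$-universal point set with $|X| = f(n) =: m$ (recall $m \ge n$). Take $\mathcal{G}_n$ as in Theorem~\ref{thm}, set $N := |\mathcal{G}_n| \ge \tfrac16 \cdot 7^{(2n-8)/11}$ and $K := (21n+552)\,4^{(n+37)/11}$, so that any simultaneously embeddable subcollection of $\mathcal{G}_n$ has size at most $K$; equivalently, for every $n$-point set $Y$ at most $K$ graphs of $\mathcal{G}_n$ embed on $Y$. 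Since $X$ is $n$-universal, every $G \in \mathcal{G}_n$ has a straight-line embedding on $X$, and as $|V(G)| = n$ the image is an $n$-point subset of $X$. Assigning to each $G$ one such subset and noting there are $\binom{m}{n}$ of them, the pigeonhole principle yields
\[
N \;\le\; \binom{m}{n}\,K, \qquad\text{hence}\qquad \binom{m}{n} \;\ge\; \frac{N}{K}.
\]

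Next I would estimate both sides asymptotically. On the right, $N/K = C_0 (21n+552)^{-1}(49/4)^{n/11}$ with $C_0 := 7^{-8/11}/(6\cdot 4^{37/11}) > 0$, so $\binom{m}{n} \ge c_1 n^{-1}(49/4)^{n/11}$ for some constant $c_1>0$ and all large $n$. On the left, the entropy bound $\binom{a}{b} \le 2^{aH(b/a)}$ (binary entropy $H$), or Stirling's formula, gives $\binom{m}{n} \le g(m/n)^n$, where $g(c) := c^c/(c-1)^{c-1}$ for $c>1$ and $g(1):=1$; a one-line computation shows $\frac{d}{dc}\ln g(c) = \ln\frac{c}{c-1} > 0$, so $g$ is strictly increasing on $[1,\infty)$. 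Combining the two estimates, $g(f(n)/n)^n \ge c_1 n^{-1}(49/4)^{n/11}$, so $g(f(n)/n) \ge (49/4)^{1/11} - o(1)$ as $n \to \infty$. (Rigorously: if $f(n) \le (1.059-\varepsilon)n$ along an infinite sequence, then $\binom{m}{n} \le g(1.059-\varepsilon)^n = \big((49/4)^{1/11}-\delta\big)^n$ for some $\delta>0$, contradicting the lower bound for $n$ large.)

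It remains to solve $g(c) = (49/4)^{1/11}$. Let $c^\ast$ be its unique root; since $g$ is increasing and $\ln g(1.059) = 0.22769\ldots < 0.22777\ldots = \tfrac{1}{11}\ln(49/4) < \ln g(1.06)$, we get $c^\ast \in (1.059, 1.06)$. By monotonicity and continuity of $g^{-1}$, $g(f(n)/n) \ge (49/4)^{1/11}-o(1)$ forces $f(n)/n \ge c^\ast - o(1) \ge 1.059 - o(1)$, which is the corollary.

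The only step that genuinely needs care is this last numerical comparison: the inequality is tight to three decimals, so one must justify the constant $1.059$ (and not $1.06$) by an explicit bound on $\ln g(1.059)$ against $\tfrac{1}{11}\ln(49/4)$; everything else is routine once the pigeonhole inequality $N \le \binom{m}{n}K$ is set up, with the polynomial factors in $N/K$ and in Stirling's estimate harmlessly absorbed into the $o(1)$.
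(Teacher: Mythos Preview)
Your proof is correct and follows essentially the same approach as the paper: a double-counting/pigeonhole argument over the $n$-subsets of a minimum universal set, combined with the bound from Theorem~\ref{thm} and Stirling's (equivalently, entropy) estimate for $\binom{m}{n}$, then solving $c\log c - (c-1)\log(c-1) = \tfrac{1}{11}\log(49/4)$ numerically. Your presentation via the function $g(c)=c^c/(c-1)^{c-1}$ and your remark about the tightness of the constant $1.059$ are slightly more explicit than the paper's, but the substance is identical.
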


We describe our construction for graphs in the family $\mathcal{G}_n$ in Section~\ref{sec:construction}.
The proof of Theorem~\ref{thm} and Corollary~\ref{cor} is discussed in Section~\ref{sec:proof}.

\section{Construction}
\label{sec:construction}

In this section, we construct a family of graphs, any sufficiently large subset of which we will later show cannot be simultaneously embedded.
\begin{defn}[{\cite[Section~2]{CHK15}}]
    A \emph{planar $3$-tree} (also known as a \emph{stacked triangulation}) is a maximal planar graph obtained by iteratively splitting a facial triangle into three new triangles with a degree-three vertex, starting from a single triangle.
\end{defn}
For each natural number $n \ge 22$, we construct a family $\mathcal{G}_n$ of planar graphs on $n$ vertices using planar $3$-trees.

Let $\mathcal{T} \coloneqq \{T_1,T_2,T_3,T_4,T_5,T_6,T_7\}$, where $T_1,\ldots,T_7$ are the graphs shown in Figure~\ref{fig:family of 7}.
Each of these graphs is a planar $3$-tree in which the face bounded by $a,b,c$ is designated as the outer face. For each $1 \leq i \leq 7$, the graph $T_i$ has $8$ vertices and $12$ faces. By \cite[Lemma 8]{CHK15}, no three of the graphs in $\mathcal{T}$ admit a simultaneous embedding with a fixed mapping for the outer face. More precisely, for any point set $X \subset \R^2$ consisting of $8$ points with convex hull equal to a triangle with vertices $A, B, C$, at most two graphs from $\mathcal{T}$ admit a plane straight-line embedding on $X$, where the vertices $a$, $b$ and $c$ are mapped to the points $A$, $B$ and $C$, respectively.

\begin{figure}[H]
    \centering
    \begin{subfigure}{0.23\textwidth}
        \centering
        \includegraphics[width=\textwidth]{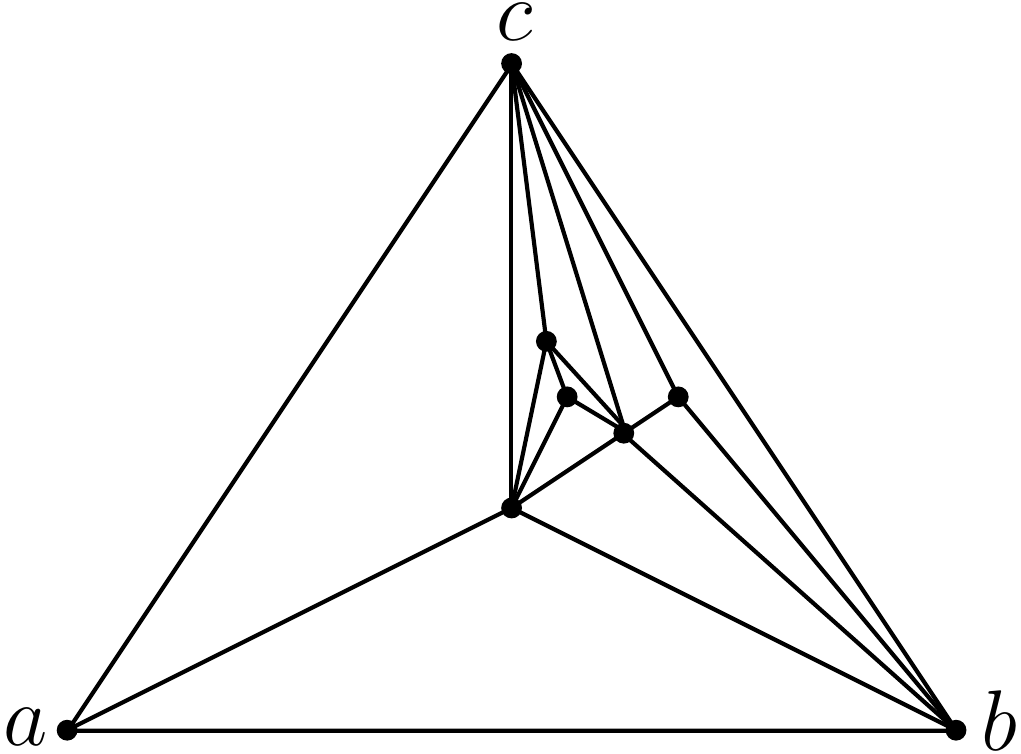}
        \caption{$T_1$}
    \end{subfigure}%
    \hfill
    \begin{subfigure}{0.23\textwidth}
        \centering
        \includegraphics[width=\textwidth]{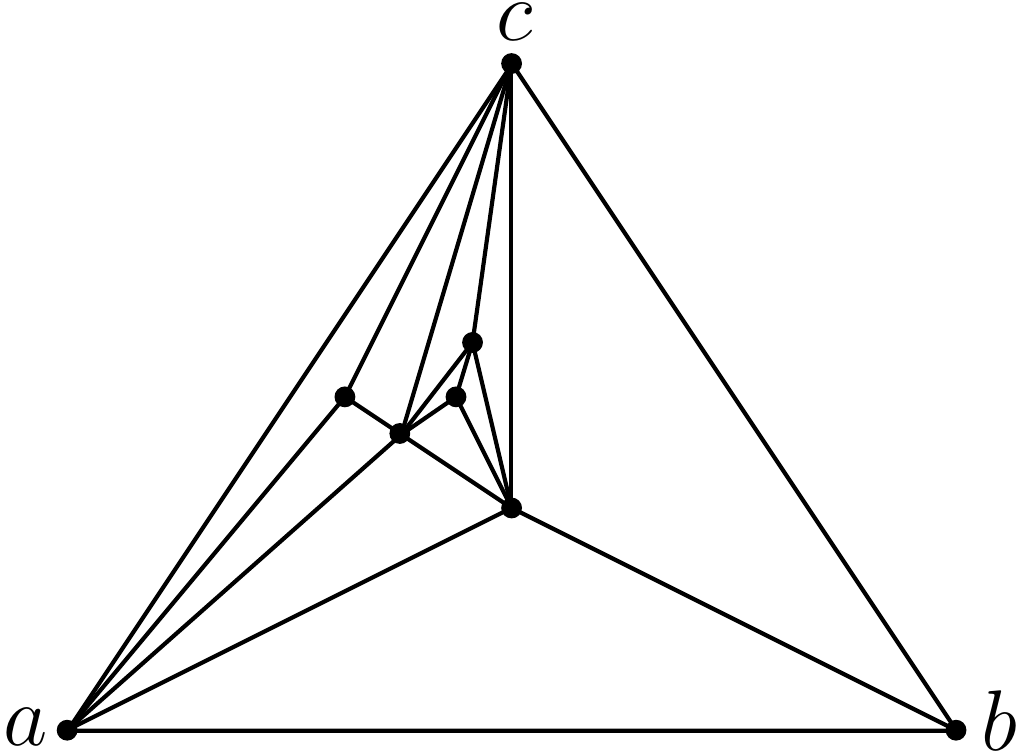}
        \caption{$T_2$}
    \end{subfigure}%
    \hfill
    \begin{subfigure}{0.23\textwidth}
        \centering
        \includegraphics[width=\textwidth]{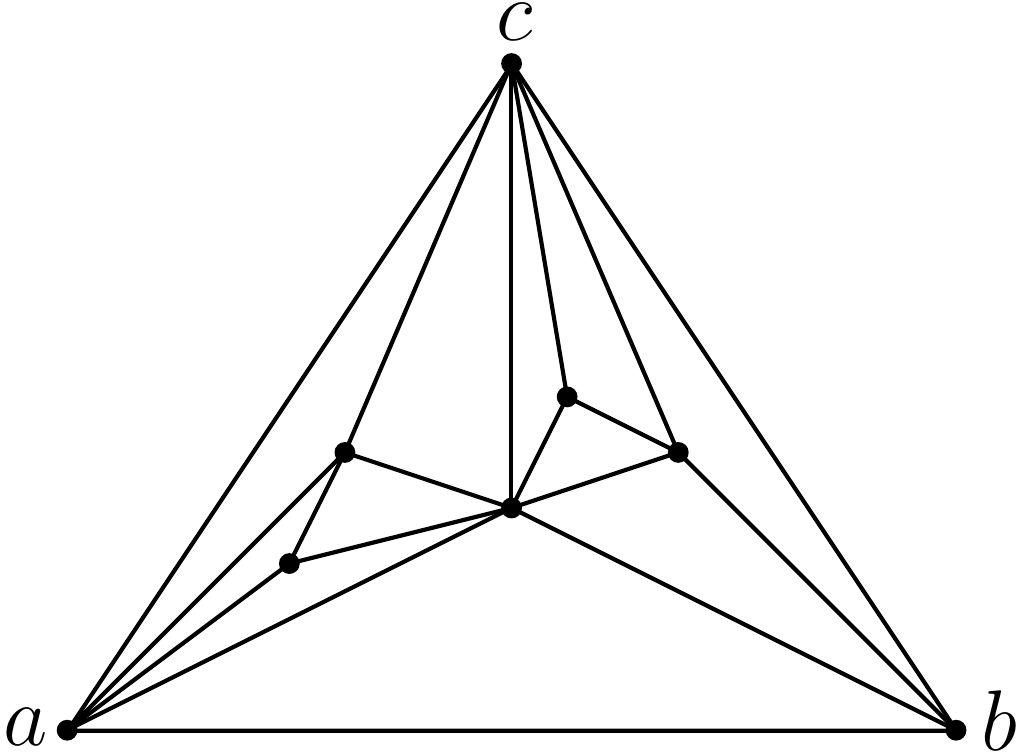}
        \caption{$T_3$}
    \end{subfigure}%
    \hfill
    \begin{subfigure}{0.23\textwidth}
        \centering
        \includegraphics[width=\textwidth]{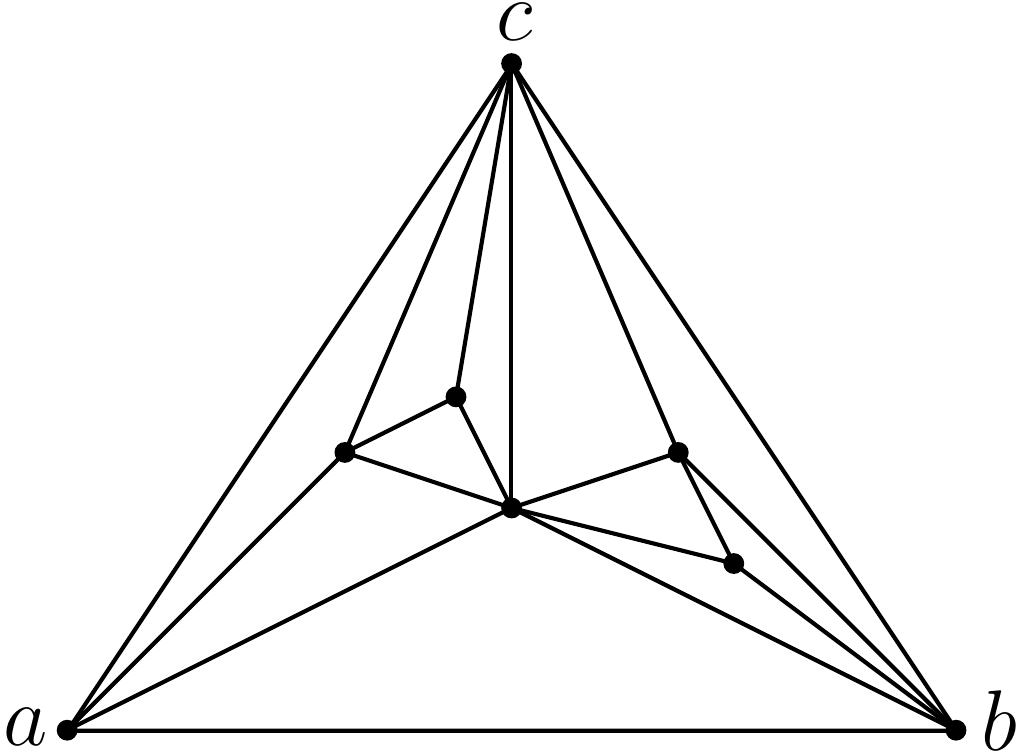}
        \caption{$T_4$}
    \end{subfigure}%
    \hfill
    \vspace{2mm}
    \begin{subfigure}{0.23\textwidth}
        \centering
        \includegraphics[width=\textwidth]{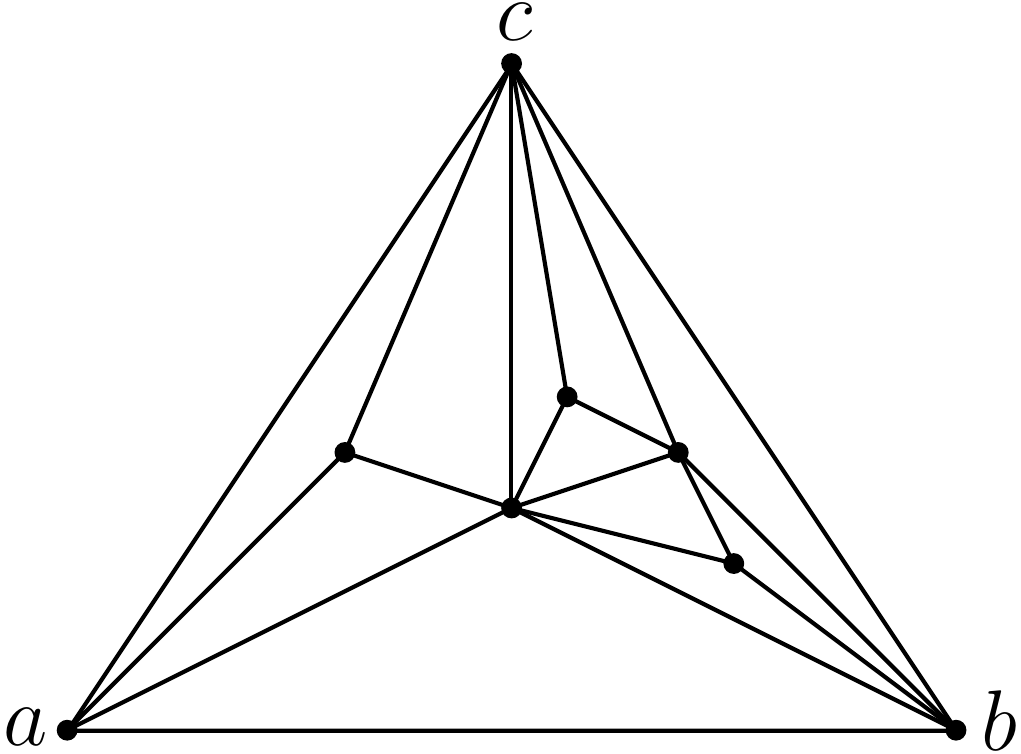}
        \caption{$T_5$}
    \end{subfigure}%
    \hfill
    \begin{subfigure}{0.23\textwidth}
        \centering
        \includegraphics[width=\textwidth]{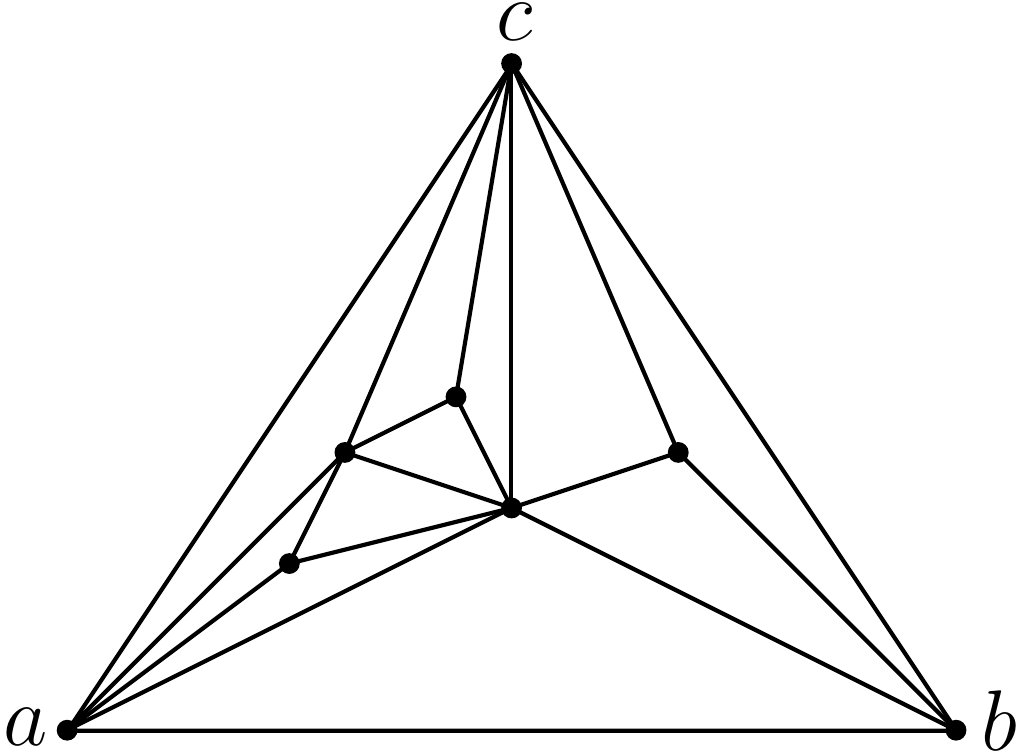}
        \caption{$T_6$}
    \end{subfigure}%
    \hfill
    \begin{subfigure}{0.23\textwidth}
        \centering
        \includegraphics[width=\textwidth]{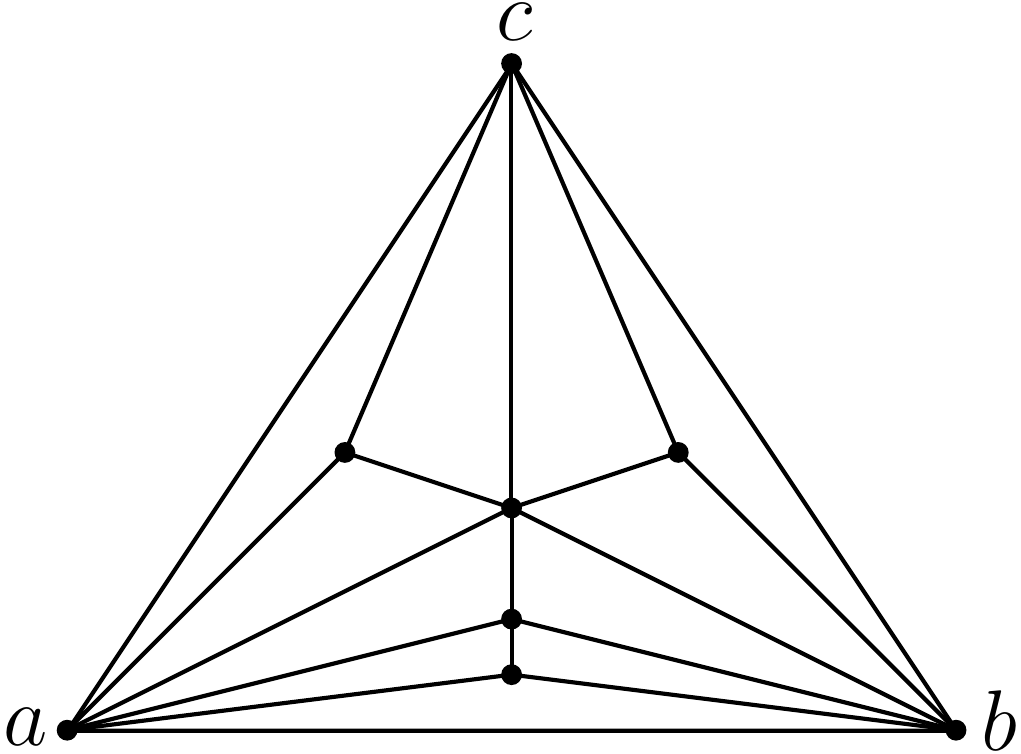}
        \caption{$T_7$}
    \end{subfigure}%
    \hfill
    \caption{Seven planar graphs, no three of which admit a simultaneous embedding with a fixed mapping for the outer face. This family is taken from \cite{CHK15}.}
    \label{fig:family of 7}
\end{figure}

Similarly, let $\widetilde{\mathcal{T}}\coloneqq \{\widetilde{T}_1,\widetilde{T}_2,\widetilde{T}_3\}$, where $\widetilde{T}_1, \widetilde{T}_2, \widetilde{T}_3$ are the graphs depicted in Figure~\ref{fig2:family of 3}. Each of these graphs is again a planar $3$-tree with the face bounded by $a,b,c$ designated as its outer face. For each $1 \leq i \leq 3$, the graph $\widetilde{T}_i$ has 5 vertices and 6 faces. Cardinal et al. \cite[Lemma 7]{CHK15} proved that these graphs do not admit a simultaneous embedding with a fixed mapping for the outer face. Note that both $\mathcal{T}$ and $\widetilde{\mathcal{T}}$ are flip-symmetric, i.e., the graph obtained by interchanging vertices $a$ and $b$ in any graph in the family is also in the family.

\begin{figure}[H]
    \centering
    \begin{subfigure}{.3\textwidth}
        \centering
        \includegraphics[width=\textwidth]{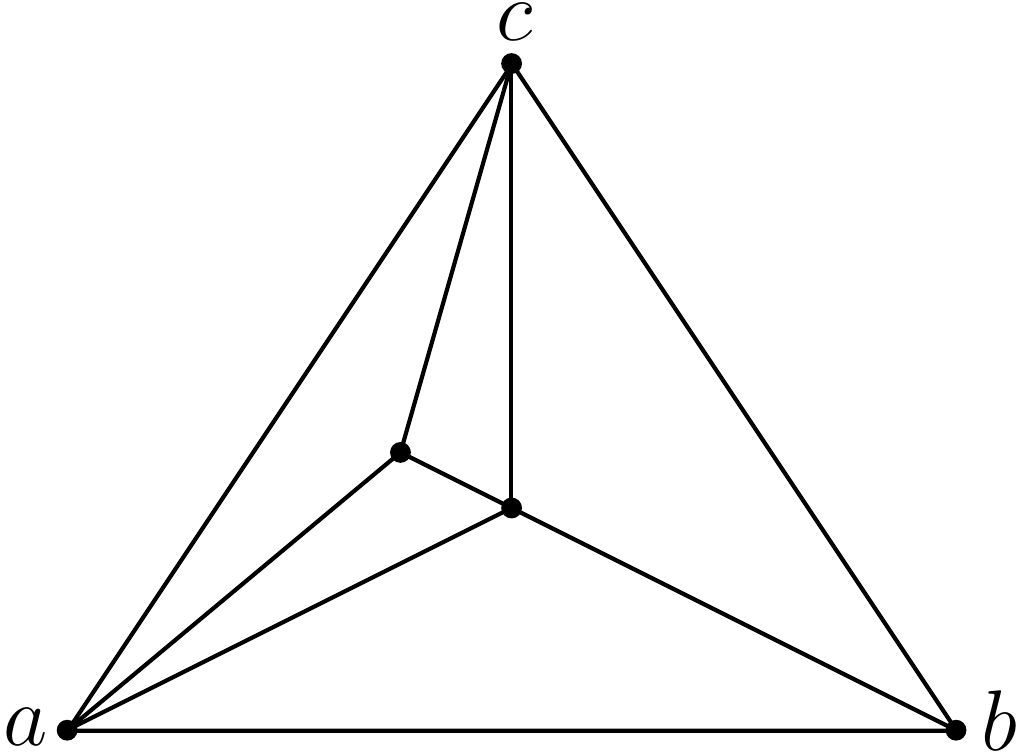}
        \caption{$\widetilde{T}_1$}
    \end{subfigure}%
    \hfill
    \begin{subfigure}{.3\textwidth}
        \centering
        \includegraphics[width=\textwidth]{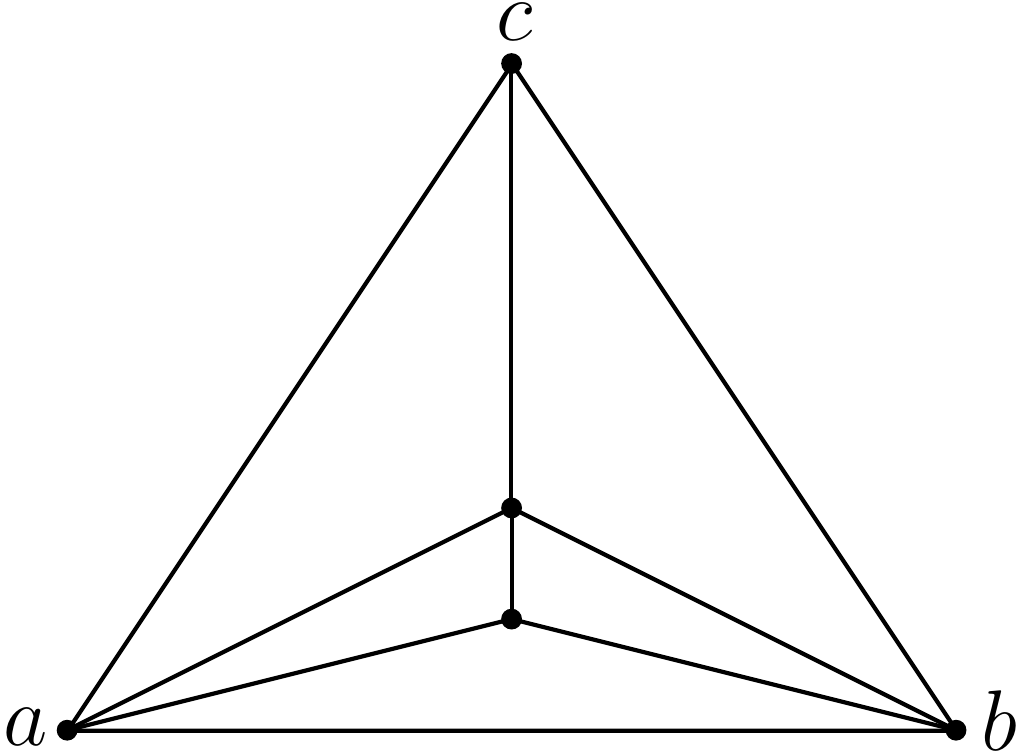}
        \caption{$\widetilde{T}_2$}
    \end{subfigure}%
    \hfill
    \begin{subfigure}{.3\textwidth}
        \centering
        \includegraphics[width=\textwidth]{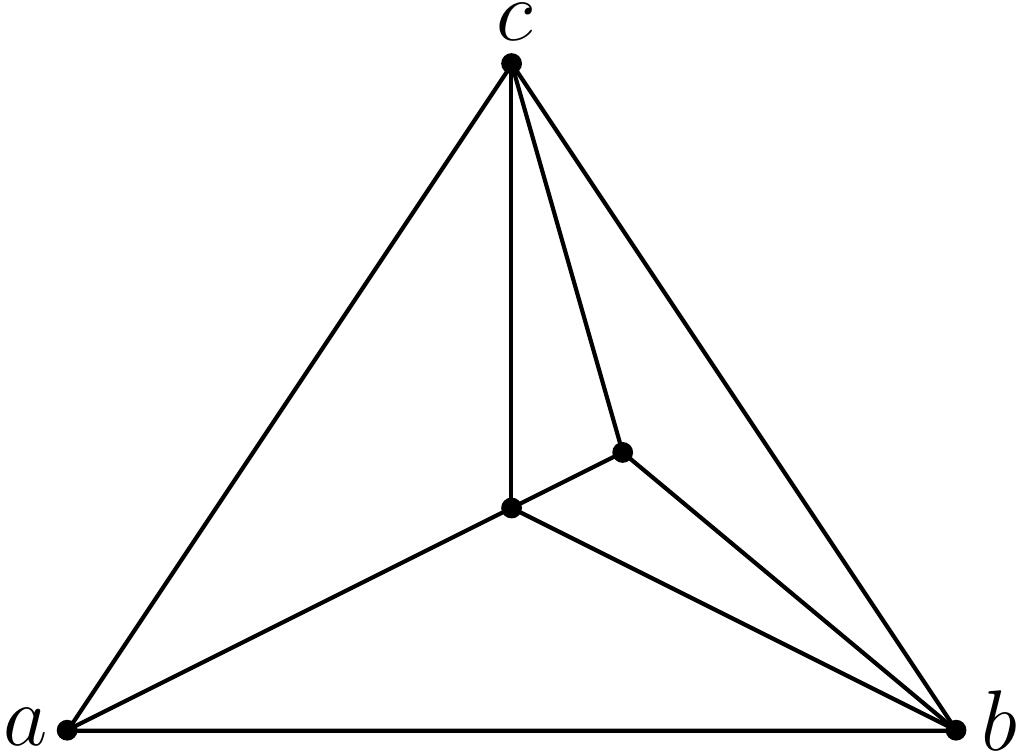}
        \caption{$\widetilde{T}_3$}
    \end{subfigure}%
    \hfill
    \caption{Three planar graphs which do not admit a simultaneous embedding with a fixed mapping for the outer face. This family is taken from \cite{CHK15}.}
    \label{fig2:family of 3}
\end{figure}

For any nonnegative integers $k_1,k_2,k_3$, we now construct a family $\g$ of graphs using graphs from the families $\mathcal{T}$ and $\widetilde{\mathcal{T}}$. Consider the skeleton graph $\skel$ shown in Figure~\ref{fig:skeleton}. The graph $\skel$ is a planar 3-tree in which the outer face is bounded by $x,y,z$, the region bounded by $y,w,z$ contains $k_1$ inner vertices, the region bounded by $z,w,x$ contains $k_2$ inner vertices, and the region bounded by $x,w,y$ contains $k_3$ inner vertices. 
\begin{figure}[H]
    \centering
    \includegraphics[scale=0.65]{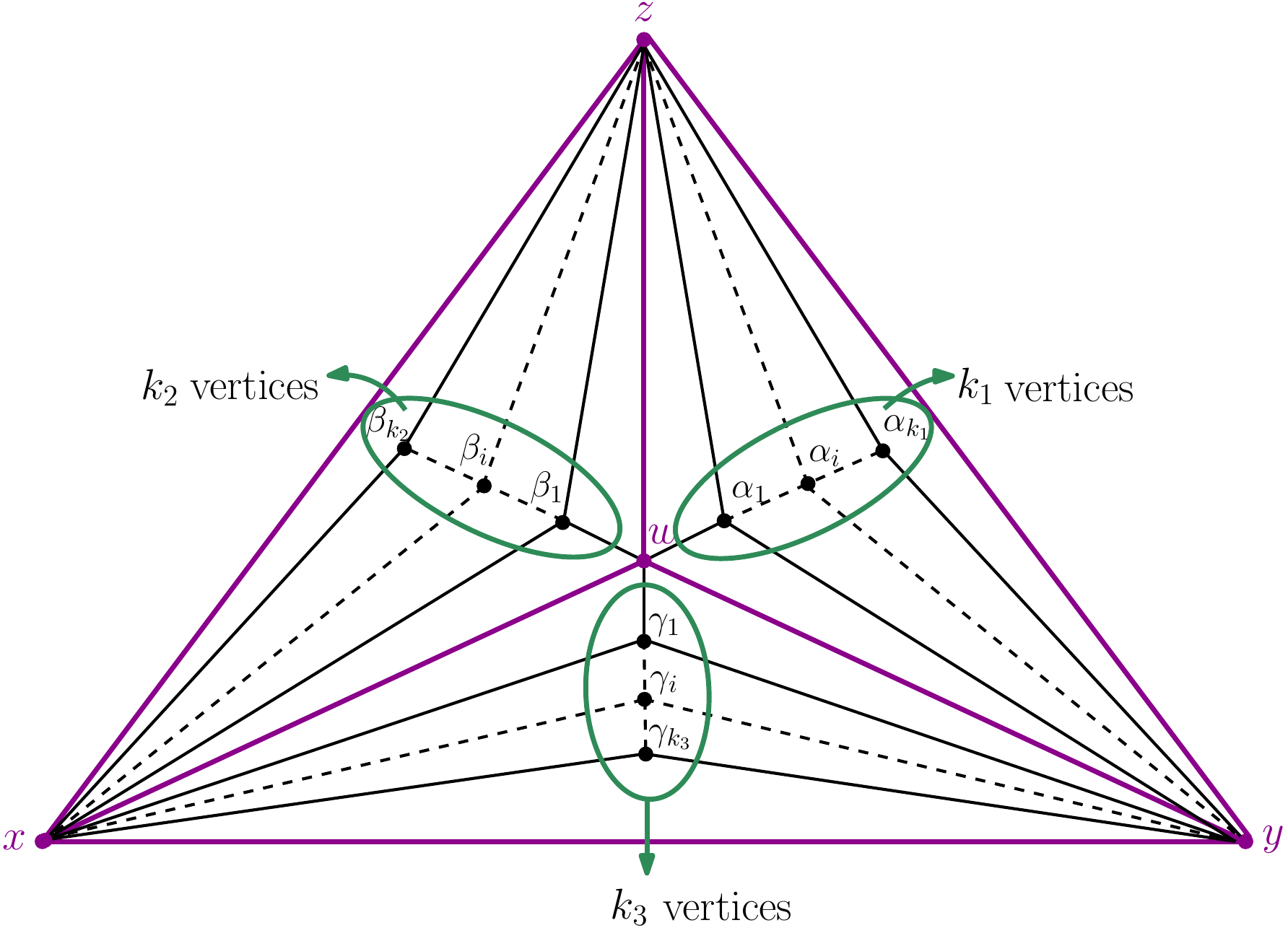}
    \caption{The skeleton graph $\skel$.}
    \label{fig:skeleton}
\end{figure}
The graph $\skel$ has $k_1+k_2+k_3+4$ vertices and $F = 2(k_1+k_2+k_3) + 4$ faces. Let $F_1$ and $F_2$ be nonnegative integers (to be determined later) satisfying $F_1 + F_2 = F - 4$. Let $\mathcal{F}$ be the collection of faces in the skeleton exactly one of whose vertices belongs to the set $\{x,y,z\}$. Note that $|\mathcal{F}| = F-4$. Consider an arbitrary but fixed partition $\mathcal{F} = \mathcal{F}_1 \cup \mathcal{F}_2$, such that $|\mathcal{F}_i| = F_i$ for $i \in \{1,2\}$.
A graph in $\g$ is obtained by planting a graph from $\mathcal{T}$ onto each face in $\mathcal{F}_1$, and a graph from $\widetilde{\mathcal{T}}$ onto each face in $\mathcal{F}_2$. A graph from $\mathcal{T}\cup \widetilde{\mathcal{T}}$ is planted onto a face $f$ of $\skel$ by identifying the set $V(f)$ of the vertices bounding $f$ with the three vertices bounding the outer face of the graph in a way that vertex $c$ is identified with the unique vertex lying in the set $\{x,y,z\}$. We do not need to specify the mapping of vertices $a$ and $b$ into $V(f)$ since $\mathcal{T}$ and $\widetilde{\mathcal{T}}$ are flip-symmetric. The total number of vertices in a graph constructed as above is given by
\begin{equation}
\label{eqn:vert}
    n = (k_1+k_2+k_3+4) + 5F_1 + 2F_2 = 11(k_1+k_2+k_3) + 4 - 3F_2.
\end{equation}
Going modulo $11$ on both sides of the above equation, we obtain
\begin{equation*}
    n \equiv 4 - 3 F_2\; (\text{mod } 11),
\end{equation*}
which further implies
\begin{equation*}
    F_2 \equiv 7n+5\; (\text{mod } 11).
\end{equation*}
Therefore, we set $F_2 = 11 \left\{ \frac{7n+5}{11}\right\}$, where for any $r\in \mathbb{R}$, $\{r\}$ denotes the fractional part of $r$.
Using this in \eqref{eqn:vert}, we obtain
\begin{equation}
\label{eqn:sum}
    k_1 + k_2 + k_3 = \frac{1}{11} (n-4) + 3 \left\{\frac{7n+5}{11}\right\},
\end{equation}
and
\begin{equation}
\label{eqn:F}
    F = 2(k_1+k_2+k_3) + 4 = \frac{2}{11}(n+18) + 6 \left\{ \frac{7n+5}{11}\right\} = \frac{2}{11} (n+18) + \frac{6}{11} F_2.
\end{equation}
Note that the assumption $n \geq 22$  guarantees that $F-4 \geq F_2$ so that $F_1$ is nonnegative.

Finally, we set $\mathcal{G}_n: = \g$, where $k_1, k_2, k_3$ are nonnegative integers that satisfy \eqref{eqn:sum} and the additional condition $k_1 \le k_2 \le k_3 \le k_1 + 1$. Such values of $k_1, k_2, k_3$ always exist and are unique; they form the most balanced ordered partition of their sum given in \eqref{eqn:sum}. Also, we denote the base skeleton for graphs in $\mathcal{G}_n$ by $B_n := B_{k_1, k_2, k_3}$.

\section{Proof of main results}
\label{sec:proof}

\begin{lemma}\label{lb}
    For $n \ge 238$, $\mathcal{G}_n$ contains at least $\frac{1}{6} \cdot 7^{(2n-8)/11}$ pairwise non-isomorphic graphs.
\end{lemma}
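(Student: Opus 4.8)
The plan is to count the graphs in $\mathcal{G}_n = \g$ directly and then divide by an upper bound on the size of each isomorphism class. Recall that a graph in $\mathcal{G}_n$ is specified by choosing, for each of the $F_1$ faces in $\mathcal{F}_1$, one of the $7$ graphs from $\mathcal{T}$, and for each of the $F_2$ faces in $\mathcal{F}_2$, one of the $3$ graphs from $\widetilde{\mathcal{T}}$ (the flip-symmetry of both families means no further choice of orientation is needed). Hence the number of labelled constructions is exactly $7^{F_1} \cdot 3^{F_2}$. The first step is to extract a clean lower bound on $7^{F_1}$: from \eqref{eqn:F} we have $F_1 = F - 4 - F_2 = \frac{2}{11}(n+18) - 2 - \frac{5}{11}F_2$, and since $0 \le F_2 \le 10$ we get $F_1 \ge \frac{2}{11}(n+18) - 2 - \frac{50}{11} = \frac{2n - 8}{11}$, so the number of constructions is at least $7^{(2n-8)/11}$.

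The second step is to bound the number of constructions that can give rise to a single isomorphic copy. Two constructions are isomorphic only via an isomorphism of the underlying skeleton $B_n = B_{k_1,k_2,k_3}$ that respects the planted pieces; since the planted graphs in $\mathcal{T}$ and $\widetilde{\mathcal{T}}$ are not isomorphic to each other (they have $8$ and $5$ vertices, respectively) and none of them is a subgraph of the skeleton in a way that could be confused, any isomorphism between two of our graphs must restrict to an automorphism of $B_n$ that preserves the partition $\mathcal{F} = \mathcal{F}_1 \cup \mathcal{F}_2$ setwise. The automorphism group of $B_{k_1,k_2,k_3}$ is small: the only possible nontrivial symmetries permute the three "sectors" bounded by $x,y,z,w$, and these exist only when the corresponding $k_i$ are equal. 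Because we have imposed $k_1 \le k_2 \le k_3 \le k_1 + 1$, at most all three $k_i$ can be equal (when the fractional-part term forces the sum to be divisible by $3$), giving $|\mathrm{Aut}(B_n)| \le 6$; in general it is a subgroup of $S_3$. Therefore each isomorphism class contains at most $6$ of our constructions, and the number of pairwise non-isomorphic graphs in $\mathcal{G}_n$ is at least $\frac{1}{6} \cdot 7^{(2n-8)/11}$.

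The main obstacle is the rigidity argument in the second step: one must verify carefully that an abstract graph isomorphism between two graphs in $\mathcal{G}_n$ really is forced to come from a skeleton automorphism, rather than from some unexpected coincidence in which a planted $T_i$ together with part of the skeleton happens to look like a differently-planted configuration elsewhere. The cleanest way is to identify the skeleton intrinsically inside each graph in $\mathcal{G}_n$ — for instance, the vertices $x, y, z, w$ and the skeleton edges can be recovered from degree data and from the fact that $w$ is the unique vertex adjacent to all of $x, y, z$ while lying on all three sectors — so that any isomorphism must map skeleton to skeleton and hence is determined by its action on $\{x,y,z,w\}$, i.e., by an element of $\mathrm{Aut}(B_n) \le S_3$. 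Once this identification is established, the planted pieces on corresponding faces must be isomorphic as rooted graphs, and the count follows. The hypothesis $n \ge 238$ is not needed for the isomorphism-class bound itself but ensures $F_1$ (equivalently the exponent $(2n-8)/11$) is large enough that the stated bound is meaningful and consistent with the ranges used later in the proof of Theorem \ref{thm}.
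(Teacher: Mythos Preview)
Your overall strategy matches the paper's, but two steps need repair. First, the arithmetic: from \eqref{eqn:F} one has $F_1 = F - 4 - F_2 = \frac{2n-8}{11} - \frac{5}{11}F_2$ (the constant is $-4$, not $-2$), so with $F_2 \le 10$ simply dropping the factor $3^{F_2}$ only yields $F_1 \ge (2n-58)/11$, which is too weak for the stated bound. The fix is to keep the $3^{F_2}$ and note that $3 > 7^{5/11}$, so that $7^{F_1}3^{F_2} = 7^{(2n-8)/11}\bigl(3/7^{5/11}\bigr)^{F_2} \ge 7^{(2n-8)/11}$; this is how the paper obtains the exponent $(2n-8)/11$.

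Second, and more importantly, you correctly flag the rigidity step as the crux but then assert that $n \ge 238$ is not needed for it. In fact that is precisely where the hypothesis is used. The paper distinguishes $x,y,z$ from all other vertices by degree: one checks $\deg(z) \ge 2(k_1+k_2)+3 \ge \frac{4}{3}\bigl(\frac{n-4}{11}-1\bigr)+3 > 30$ once $n \ge 238$, while every other skeleton vertex has degree at most $6 + 4\cdot 6 = 30$ (degree at most $6$ in $B_n$, plus at most four new neighbours from each of at most six incident planted faces), and every internal vertex of a planted $T_i$ or $\widetilde T_j$ has degree at most $7$. Hence any isomorphism must carry $\{x,y,z\}$ to itself; then $w$ is forced as the unique common neighbour of $x,y,z$, and the rest of $B_n$ follows inductively. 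Without this concrete degree separation your claim that an isomorphism between two graphs in $\mathcal{G}_n$ must restrict to an automorphism of $B_n$ is unsupported, and for small $n$ the separation genuinely fails, so the hypothesis is not decorative.
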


\begin{proof}
    From the construction, the number of graphs in $\mathcal{G}_n$ is given by
    \begin{equation*}
        |\mathcal{G}_n| = |\mathcal{T}|^{F_1} |\widetilde{\mathcal{T}}|^{F_2}.
    \end{equation*}
    It suffices to show that each isomorphism class of $\mathcal{G}_n$ contains at most six graphs since the above equation and equation \eqref{eqn:F} together imply that the number of non-isomorphic graphs in $\mathcal{G}_n$ is bounded below by
    \begin{equation*}
        \frac{1}{6} \cdot |\mathcal{G}_n| = \frac{1}{6} \cdot 7^{(2n-8-5F_2)/11}\  3^{F_2} = \frac{1}{6}  \cdot 7^{\frac{2n-8}{11}} \left(\frac{3}{7^{5/11}}\right)^{F_2} \ge \frac{1}{6} \cdot 7^{\frac{2n-8}{11}}.
    \end{equation*} 
    We now bound the size of any isomorphism class of $\mathcal{G}_n$. Let $G \in \mathcal{G}_n$. Suppose that $H \in \mathcal{G}_n$ is isomorphic to $G$. Let $\varphi$ be an isomorphism from $G$ to $H$. Note from the construction in the previous section that the degree of the vertex $z$ satisfies the inequality
    \begin{align*}
        \text{deg}(z) \ge 2(k_1 + k_2) + 3 \ge 2 \cdot \frac{2}{3} \left(\frac{n-4}{11} - 1\right) + 3 > 30. 
    \end{align*}
    The same inequality also holds for vertices $x$ and $y$. The degree of any other vertex $v$ belonging to the underlying skeleton graph $B_n$ is bounded above by
    \begin{equation*}
        \text{deg}_{B_n}(v) + 4 \times \text{number of faces incident to $v$ in } B_n,
    \end{equation*}
    which is at most $6 + 4 \cdot 6 = 30$. Finally, the degree of an internal vertex of a face of $B_n$ is bounded above by $7$. Since graph isomorphisms preserve degrees, we conclude that $\varphi$ maps $\{x, y, z\}$ to $\{x, y, z\}$. Therefore, there are $3!$ possibilities for the map $\varphi$ restricted to the set $\{x, y, z\}$. We will show that fixing $\varphi|_{\{x, y, z\}}$ yields at most one possibility for the graph $H$.

    Suppose that $\sigma \in S_3$ is the permutation that corresponds to the mapping $\varphi|_{\{x, y, z\}}$, where $x$, $y$, and $z$ are indexed as the first, second, and third symbols, respectively. Note that $w$ is the unique vertex incident to all of $x$, $y$, and $z$. Therefore, $\varphi(w) = w$. Using a similar idea, it follows from an inductive argument that $\varphi(\xi_i) = \varphi(\sigma(\xi)_i)$, where $\xi \in \{\alpha, \beta, \gamma\}$ and $\alpha$, $\beta$, and $\gamma$ are indexed as the first, second, and third symbols, respectively. Therefore, $\varphi$ maps $B_n$ to $B_n$. Note that no two graphs in $\mathcal{T}$ (similarly, $\widetilde{\mathcal{T}}$) are isomorphic, provided that the isomorphism fixes the vertices of the outer face. Now it is clear that for any face $a, b, c$ of $B_n$, the graph planted onto $a, b, c$ in $G$ must be the same as the graph planted onto the face $\varphi(a), \varphi(b), \varphi(c)$ in $H$. Hence, there is at most one possibility for $H$.   
\end{proof}

\begin{remark}
    When $k_1, k_2, k_3$ are not all equal (exactly two of them are equal), it is possible to improve the constant in the lower bound in Lemma~\ref{lb}. In this case, the proof of the lemma implies that there are at most $2!$ graphs in each isomorphism class of $\mathcal{G}_n$.
\end{remark}

The following lemma will be crucial in the proof of the upper bound (Lemma~\ref{ub}).

\begin{lemma}
\label{lem:partial}
    Let $G$ be a planar $3$-tree with $n$ vertices, and $X \subset \R^2$ be a set of $n$ points in general position with convex hull $\mathrm{Conv}(\{p_1, p_2, p_3\})$. Suppose that the face bounded by $a, b, c$ is the outer face of $G$, with the partial straight-line embedding $\phi$ of $G$ on $X$ given by $\{a \mapsto p_1, b \mapsto p_2, c \mapsto p_3\}$. Further, suppose that $d$ is a vertex adjacent to $a$, $b$, and $c$ such that the triangle bounded by $b, c, d$ contains $n_1$ vertices, the triangle bounded by $a, c, d$ contains $n_2$ vertices, and the triangle bounded by $a, b, d$ contains $n_3$ vertices. Then there is at most one choice for the point $\phi(d)$ while completing $\phi$ to a straight-line embedding.
\end{lemma}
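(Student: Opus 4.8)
The plan is to reduce the claim to a statement about how many points of $X$ lie in the interiors of certain triangles, and then prove that statement by a nesting/monotonicity argument.

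\emph{Step 1: the position of $d$ forces a point count.} Fix any completion of $\phi$ to a straight-line embedding of $G$ and set $q := \phi(d) \in X$. Since $a, b, c$ bound the outer face, every other vertex of $G$ — in particular $d$ — is placed strictly inside $\mathrm{Conv}(\{p_1, p_2, p_3\})$. Edges are drawn as straight segments, so the cycle $b, c, d$ of $G$ is embedded as the geometric triangle $\triangle p_2 p_3 q$, and by planarity the $n_1$ vertices of $G$ lying inside the $b, c, d$-triangle are mapped bijectively onto the points of $X$ lying strictly inside $\triangle p_2 p_3 q$. One checks that no other vertex of $G$ lands in this interior: $\phi(a) = p_1$ lies outside $\triangle p_2 p_3 q$, the points $p_2, p_3, q$ lie on its boundary, and the remaining $n_2 + n_3$ vertices are sent into $\triangle p_1 p_3 q$ and $\triangle p_1 p_2 q$, whose interiors, together with that of $\triangle p_2 p_3 q$, partition $\mathrm{Conv}(\{p_1,p_2,p_3\})$ and are therefore pairwise disjoint. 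Hence $q$ satisfies: $\triangle p_2 p_3 q$ contains exactly $n_1$ points of $X$ in its interior, and cyclically $\triangle p_1 p_3 q$ contains exactly $n_2$ and $\triangle p_1 p_2 q$ exactly $n_3$. It therefore suffices to show that at most one point $q \in X$ lying strictly inside $\mathrm{Conv}(\{p_1, p_2, p_3\})$ has all three of these properties.

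\emph{Step 2: uniqueness of such a $q$.} Suppose $q, q' \in X$ are two such points. The segments $q p_1, q p_2, q p_3$ split $\mathrm{Conv}(\{p_1, p_2, p_3\})$ into $\triangle p_2 p_3 q$, $\triangle p_1 p_3 q$, $\triangle p_1 p_2 q$, which have pairwise disjoint interiors and whose pairwise intersections lie on these three segments. As $X$ is in general position, $q'$ lies on none of the segments $q p_i$, and since $q' \ne q$ it lies in the interior of exactly one of the three triangles, say (after relabelling) the interior of $\triangle p_2 p_3 q$. Then $\triangle p_2 p_3 q' \subseteq \triangle p_2 p_3 q$ by convexity, and in fact the interior of $\triangle p_2 p_3 q'$ is contained in the interior of $\triangle p_2 p_3 q$ — this I would verify directly in barycentric coordinates relative to $p_2, p_3, q$, using that $q'$ has all three coordinates positive. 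Consequently every point of $X$ interior to $\triangle p_2 p_3 q'$ is interior to $\triangle p_2 p_3 q$; but $q'$ itself is interior to $\triangle p_2 p_3 q$ while being a vertex, hence not an interior point, of $\triangle p_2 p_3 q'$. So $\triangle p_2 p_3 q'$ contains strictly fewer points of $X$ in its interior than $\triangle p_2 p_3 q$ does, contradicting that both counts equal $n_1$. Hence $q = q'$, and there is at most one possible value of $\phi(d)$.

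Step 1 is essentially bookkeeping with standard properties of straight-line embeddings of planar $3$-trees. The crux — and the only place any care is needed — is the strict nesting in Step 2: moving the apex of a triangle to a strict interior point of that same triangle strictly decreases the number of points of $X$ in its interior, because $q'$ passes from ``interior'' to ``boundary''. General position of $X$ is used only to guarantee that $q'$ avoids the three cevian segments out of $q$, so that $q'$ lies in a single sub-triangle and the relevant one of the three count conditions applies.
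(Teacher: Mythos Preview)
Your proof is correct and follows essentially the same approach as the paper: reduce to uniqueness of a point $q\in X$ with prescribed interior counts in the three sub-triangles, then observe that a second such point $q'$ must lie strictly inside one of the sub-triangles and hence gives a strictly smaller count there. The paper's proof is terser but identical in substance; your Step~1 bookkeeping and the remark on how general position is used are welcome elaborations.
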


\begin{proof}
    It suffices to show that there is at most one point $p \in X$, such that the number of points of $X$ lying inside the triangle $p_2, p_3, p$ is $n_1$, the number of points inside the triangle $p_1, p_3, p$ is $n_2$, and the number of points inside the triangle $p_1, p_2, p$ is $n_3$. Suppose there are two such points, say $p'$ and $p''$. Without loss of generality, we may assume that $p''$ lies inside the triangle $p_1, p_2, p'$. Then the number of points inside the triangle $p_1, p_2, p''$ is strictly less the number of points inside the triangle $p_1, p_2, p'$, a contradiction.
\end{proof}

\begin{lemma}\label{ub}
    For $n \ge 238$, at most $(21n + 552) 4^{(n+37)/11}$ pairwise non-isomorphic graphs in $\mathcal{G}_n$ are simultaneously embeddable.
\end{lemma}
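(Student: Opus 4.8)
The plan is to fix a point set $X$ of $n$ points in general position (we may assume general position, since a perturbation argument lets us reduce to this case without decreasing the number of simultaneously embeddable graphs) and to bound the number of graphs in $\mathcal{G}_n$ that admit a plane straight-line embedding on $X$. The convex hull of $X$ need not be a triangle, but for any graph $G \in \mathcal{G}_n$ the outer face is bounded by $x, y, z$, so in any embedding of $G$ on $X$ these three vertices must map to the three convex-hull vertices \emph{if} $|\mathrm{Conv}(X)| = 3$; in general we first pay a factor of at most $\binom{n}{3} \cdot 3! = n(n-1)(n-2)$ for the choice of images of $x, y, z$ among the points of $X$, noting that the three of them must bound all the remaining $n-3$ points (so in fact only hull triangles count, but the crude bound suffices). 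Having fixed $\phi(x), \phi(y), \phi(z)$, I would apply Lemma~\ref{lem:partial} repeatedly: $w$ is adjacent to all of $x, y, z$, so its image is forced; then each of $\alpha, \beta, \gamma$ (the "children" of the three subtriangles in the skeleton) is adjacent to three already-placed vertices with prescribed subtriangle vertex-counts, so again forced; inductively every vertex of the skeleton $B_n$ has its image uniquely determined. Thus the embedding of the whole skeleton $B_n$ on $X$ is determined (at most one way) by the placement of $x,y,z$.

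Next I would count the freedom in the planted gadgets. Once $B_n$ is embedded, each face $f$ of $B_n$ in $\mathcal{F} = \mathcal{F}_1 \cup \mathcal{F}_2$ has its three bounding vertices placed, enclosing some fixed subset $X_f \subset X$; the gadget planted on $f$ must be embedded using exactly the points of $X_f$, with $V(f)$ as the triangle. For a face $f \in \mathcal{F}_1$ the gadget comes from $\mathcal{T}$ and has $5$ interior vertices, so $|X_f| = 5$; by the cited result \cite[Lemma 8]{CHK15} (the "no three of $\mathcal{T}$ embed" property), at most $2$ of the $7$ graphs in $\mathcal{T}$ can be realized on that particular $5$-point configuration with the fixed outer-face mapping — but since $\mathcal{T}$ is flip-symmetric we do not care about the mapping of $a, b$, so the count of realizable \emph{graphs} is still at most $2$. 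Similarly, for $f \in \mathcal{F}_2$ the gadget comes from $\widetilde{\mathcal{T}}$, which has $3$ members no three of which simultaneously embed \cite[Lemma 7]{CHK15}, so again at most $2$ choices. Hence, for each fixed placement of $x, y, z$, the number of graphs in $\mathcal{G}_n$ embeddable on $X$ extending that placement is at most $2^{F_1} \cdot 2^{F_2} = 2^{F-4}$.

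Combining, the number of simultaneously embeddable graphs on $X$ is at most $n(n-1)(n-2) \cdot 2^{F-4}$; using \eqref{eqn:F}, $F - 4 = \tfrac{2}{11}(n+18) + \tfrac{6}{11}F_2 - 4 \le \tfrac{2}{11}(n+18) + \tfrac{6}{11}\cdot 10$ since $F_2 \le 10$, so $2^{F-4} \le 2^{(2n+36+60-88)/11} = 2^{(2n+8)/11} \le 4^{(n+37)/11}$ (one checks the exponent bookkeeping against the target, adjusting the polynomial prefactor; the $(21n+552)$ in the statement absorbs the cubic $n(n-1)(n-2)$ together with the looseness in these estimates, valid for $n \ge 238$). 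The main obstacle I anticipate is the careful bookkeeping in two places: first, justifying that the forced-placement induction via Lemma~\ref{lem:partial} really does propagate through the entire skeleton (one must check each new skeleton vertex is adjacent, in $G$, to three already-embedded vertices and that the relevant subtriangle vertex-counts are determined by the combinatorial structure, not by the embedding); and second, verifying that the flip-symmetry genuinely removes the dependence on the $a \leftrightarrow b$ labeling so that "at most two graphs" (rather than "at most two labeled embeddings") is the correct gadget count. The numerics at the end are routine once the structural count $n(n-1)(n-2)\cdot 2^{F-4}$ is in hand.
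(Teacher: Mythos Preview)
There are two genuine problems.

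First, and most importantly, you assume that in any straight-line embedding of $G \in \mathcal{G}_n$ on $X$ the vertices $x, y, z$ are sent to the vertices of $\mathrm{Conv}(X)$. This is false: the graphs in $\mathcal{G}_n$ are abstract $3$-connected planar graphs, and while $xyz$ is the outer face in the \emph{construction}, in a straight-line embedding \emph{any} face of $G$ may be the unbounded one. If some other face $f$ is outer, then $xyz$ is an inner face, the triangle $\phi(x)\phi(y)\phi(z)$ contains no other points of $X$, and your induction via Lemma~\ref{lem:partial} cannot even start (the hypothesis that $p_1,p_2,p_3$ are the convex hull fails). Summing over all $n(n-1)(n-2)$ ordered triples for $\phi(x),\phi(y),\phi(z)$ does not help, because for the non-hull placements you have no mechanism to pin down $\phi(w)$. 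The paper handles exactly this by partitioning according to which face $f$ of $G$ is outer: it records the skeleton face $h$ containing $f$, the gadget $T$ planted in $h$, the face of $T$ equal to $f$, and the bijection from $f$ to $\mathrm{Conv}(X)$; only then does the Lemma~\ref{lem:partial} induction (applied to the $3$-tree $B_n$ together with the gadget in $h$) go through. This bookkeeping yields a prefactor $6(77F_1+15F_2+4)$, which is \emph{linear} in $n$.

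Second, even if one granted your count $n(n-1)(n-2)\cdot 2^{F-4}$, the numerics do not close. One has $2^{F-4}\le 2^{(2n+52)/11}=4^{(n+26)/11}$, so your bound is at best $n^3\cdot 4^{(n+26)/11}$, whereas the target is $(21n+552)\,4^{(n+37)/11}=(84n+2208)\,4^{(n+26)/11}$; a linear term cannot absorb a cubic, and already at $n=238$ one has $n^3\gg 84n+2208$. Your parenthetical ``only hull triangles count'' would replace $n(n-1)(n-2)$ by $3!$, but that reduction is valid only once you know $xyz$ is the outer face---which sends you back to the first problem.
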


\begin{proof}
    Let $\mathcal{G} \subseteq \mathcal{G}_n$ consist of pairwise non-isomorphic graphs that are simultaneously embeddable on an $n$-point set $X$ in the plane. Since $\mathcal{G}$ is a class of maximal planar graphs, the convex hull $\text{Conv}(X)$ of $X$ must be a triangle. Without loss of generality, we may assume that no three points in $X$ are collinear. Otherwise, we can slightly perturb the point set $X$ so that the same embeddings remain valid. 
    
    Let $G \in \mathcal{G}$. Let $f$ denote the face of $G$ that maps to the vertices of $\text{Conv}(X)$. We partition $\mathcal{G}$ into classes based on the face $h$ of $B_n$ containing $f$, the graph $T$ planted in the face $h$ (with the convention that $T$ is a $3$-cycle when $h$ is one of $\langle x, y, z \rangle$, $\langle x, y, \gamma_{k_3} \rangle$, $\langle \alpha_{k_1}, y, z \rangle$, and $\langle x, \beta_{k_2}, z \rangle $), the face of $T$ that corresponds to $f$, and the mapping of the vertices of $f$ to the vertices of $\text{Conv}(X)$. This divides $\mathcal{G}$ into at most $6(11 \cdot 7 \cdot F_1 + 5 \cdot 3 \cdot F_2 + 1 \cdot 1 \cdot 4)$ partition classes. The three terms in the above expression correspond to the cases when $T \in \mathcal{T}$, $T \in \widetilde{\mathcal{T}}$, and $T$ is a $3$-cycle, respectively.

    Let $\mathcal{C}$ be one of these partition classes. We claim that for all graphs in $\mathcal{C}$, their straight-line embeddings on $X$ map the vertices of $B_n$ as well as the internal vertices of the graph planted in face $h$ to the same points of $X$. Let $G \in \mathcal{C}$ and let $H$ be the induced subgraph of $G$ on the vertices of $B_n$ along with the internal vertices of the graph planted in face $h$. The graph $H$ is the same for all graphs in $\mathcal{C}$. Furthermore, the number of internal vertices of $G$ in each inner face (a face other than $f$) of $H$ is also the same for all graphs in $\mathcal{C}$. Noting that $H$ is a $3$-tree, the claim follows from a straightforward inductive argument using Lemma~\ref{lem:partial}. We remark that $H$ can be viewed as a new skeleton for graphs in the partition class $\mathcal{C}$. Figure~\ref{fig:H} shows an example of a graph $G$ on $22$ vertices and the induced subgraph $H$ for all graphs in the corresponding partition class.

    \begin{figure}[H]
        \centering
        \hspace{25pt}
        \begin{subfigure}{0.4\textwidth}
            \centering
            \includegraphics[width=\textwidth]{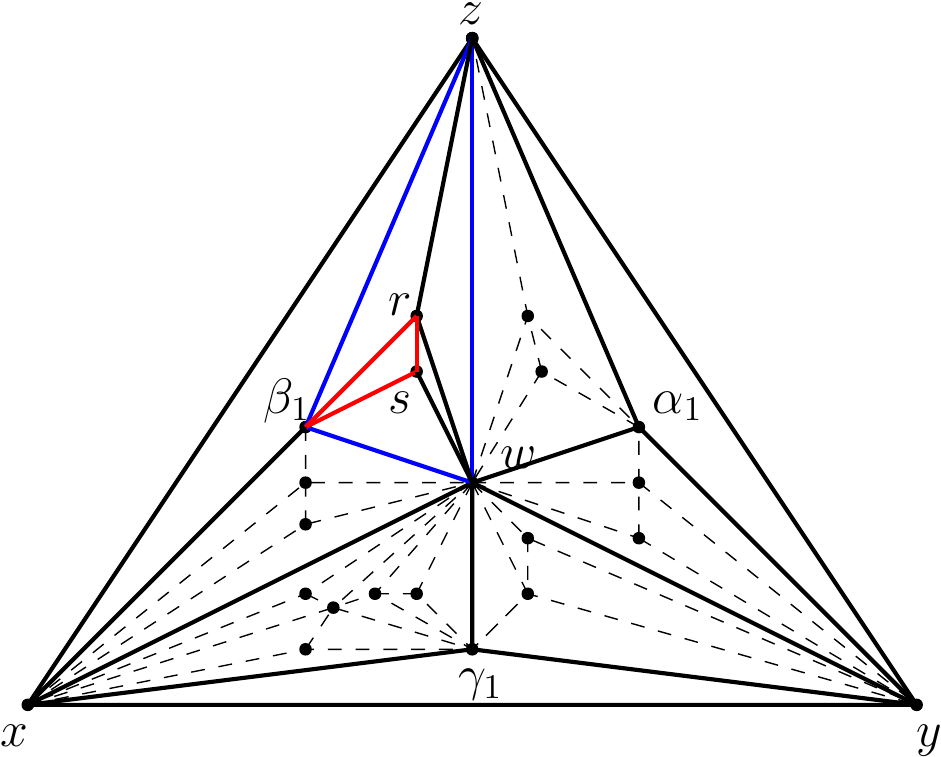}
        \end{subfigure}
        \hfill
        \begin{subfigure}{0.4\textwidth}
            \centering
            \includegraphics[width=\textwidth]{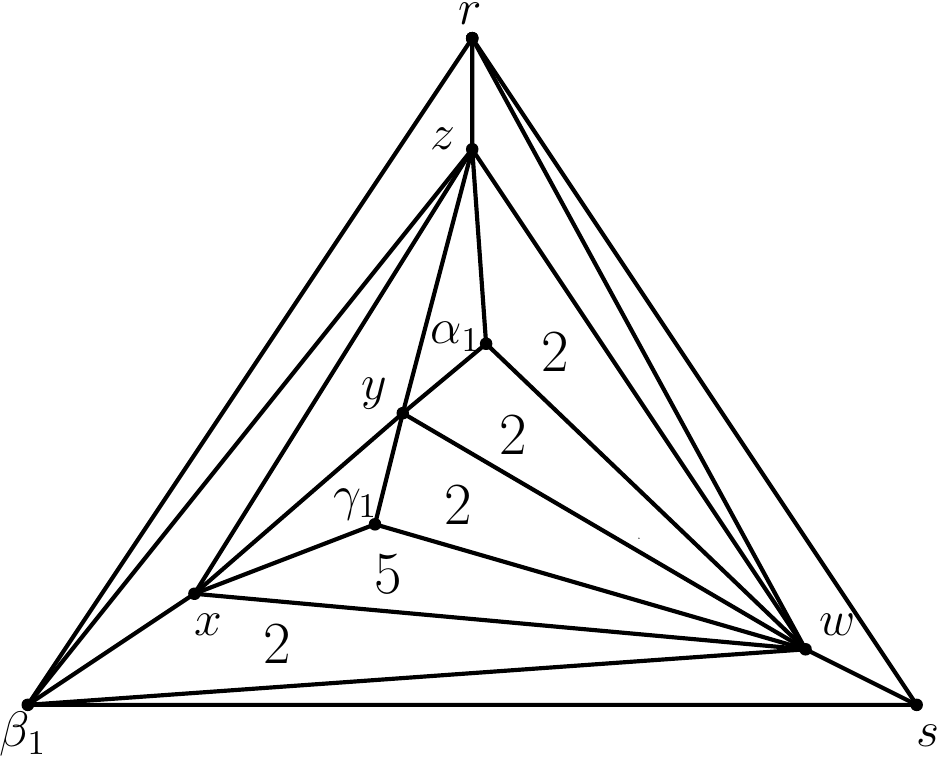}
        \end{subfigure}
        \hspace{25pt}
        \caption{On the left is a graph $G \in \mathcal{G}_{22}$ with $\beta_1, s, r$ as the face $f$ (red edges) and $\beta_1, w, z$ as the face $h$ (blue edges). The edges of the induced subgraph $H$ are solid, while the remaining edges are dashed. On the right is the induced subgraph $H$ common to all graphs in the corresponding partition class. Faces are marked with the number of internal vertices if the latter is nonzero.}
        \label{fig:H}
    \end{figure}

    It follows that the graph planted in a fixed face $g \ne h$ of $B_n$ must be embedded on the same subset of $X$ for all graphs in $\mathcal{C}$ since the mapping for the outer face of $g$ is already fixed. But we know that at most two graphs in $\mathcal{T}$ (similarly, $\widetilde{\mathcal{T}}$) can be simultaneously embedded with a fixed mapping for the outer face. Therefore, we conclude that
    \begin{equation*}
        |\mathcal{C}| \le 2^{F_1 + F_2},
    \end{equation*}
    which further implies
    \begin{align*}
        |\mathcal{G}| \le 6(77 \cdot F_1 + 15 \cdot F_2 + 4) 2^{F_1 + F_2} &\le 6(77(F-4) + 4) 2^{F-4},\\
        &\le 6 \left(77 \cdot \frac{2n+52}{11} + 4\right) 2^{(2n+52)/11},\\
        &= (21n + 552) 4^{(n+37)/11},
    \end{align*}
    where we used \eqref{eqn:F} and the fact $F_2 \leq 10$.
\end{proof}

We end the paper by presenting the proof of Theorem~\ref{thm} and Corollary~\ref{cor}.

\begin{proof}[Proof of Theorem~\ref{thm}]
It follows from Lemma~\ref{lb} and Lemma~\ref{ub}. The statement of the theorem holds with a maximal collection of non-isomorphic graphs in $\mathcal{G}_n$ renamed to $\mathcal{G}_n$. 
\end{proof}

\begin{proof}[Proof of Corollary~\ref{cor}]
Let $n$ be sufficiently large. Consider the collection $\mathcal{G}_n$ from Theorem~\ref{thm}. We use a double counting argument to give a lower bound on $f(n)$.

Let $X$ be a point set with size $m$ so that all graphs in $\mathcal{G}_n$ admit a straight-line embedding on $X$. In particular, for each graph $G \in \mathcal{G}_n$, there is a subset $Y \subset X$  with size $n$, such that $G$ admits a straight-line embedding on $Y$. On the other hand, by Theorem~\ref{thm}, for any subset $Y \subset X$ with size $n$, at most $(21n + 552) 4^{(n+37)/11}$ graphs from $\mathcal{G}_n$ admit a straight-line embedding on $X$. Therefore, we conclude that
\begin{equation*}
    \binom{m}{n} (21n + 552) 4^{(n+37)/11} \geq |\mathcal{G}_n|\geq \frac{7^{(2n-8)/11}}{6}.
\end{equation*}
Taking logarithms and dividing by $n$ on both sides of the above inequality, we get
\begin{equation}
\label{eqn:double}
    \frac{1}{n}\log \binom{m}{n}+ \frac{2}{11} \log 2+o(1) \geq \frac{2}{11}\log 7+o(1).
\end{equation}
By Stirling's approximation, we have
\begin{align*}
    \log \binom{m}{n} &= m\log m-n \log n-(m-n)\log (m-n)+O(\log m),\\
    &= n \log \frac{m}{n}+(m-n)\log \frac{m}{m-n}+O(\log m).
\end{align*}
Assuming $\log m = o(n)$ and using the above estimate in \eqref{eqn:double} yields
\begin{equation*}
    \log \frac{m}{n}+\bigg(\frac{m}{n}-1\bigg)\log \frac{m}{m-n}+o(1) \geq \frac{2}{11}(\log 7-\log 2),
\end{equation*}
and thus $f(n) \geq m \geq (1.059-o(1))n$.
\end{proof}

\section*{Acknowledgement}
The authors thank J\'ozsef Solymosi for helpful discussions.

\end{document}